\documentclass[11pt]{article}
\usepackage[T1]{fontenc}
\usepackage[utf8]{inputenc}
\usepackage{lmodern}
\usepackage[english]{babel}
\usepackage{microtype}
\usepackage{amsmath,amssymb,amsfonts,amsthm,mathtools}
\usepackage[shortlabels]{enumitem}
\usepackage{geometry}
\usepackage[colorlinks=true,linkcolor=blue,citecolor=blue,urlcolor=blue]{hyperref}
\newtheorem{theorem}{Theorem}[section]
\newtheorem{lemma}[theorem]{Lemma}
\newtheorem{claim}[theorem]{Claim}
\newtheorem{proposition}[theorem]{Proposition}
\newtheorem{corollary}[theorem]{Corollary}

\newtheorem{conjecture}[theorem]{Conjecture}
\newcommand{\eps}{\varepsilon}

\newcommand{\ceil}[1]{\lceil #1\rceil}

\newcommand{\1}{\mathbf 1}

\usepackage{xcolor}

\usepackage{extarrows}
\title{Asymptotic Uniformity of Permanents of Random Matrices over Finite Fields of Odd Characteristic}

\author{Shuang Sun\thanks{Email: \texttt{chocolatesun@sjtu.edu.cn}},\quad Yuyao Yang\thanks{Email: \texttt{alaia\_y@sjtu.edu.cn}},\quad and Jiasheng Zeng\thanks{Email: \texttt{jasonzeng@mail.ustc.edu.cn}}}
\date{\today}

\begin{document}
\maketitle
\begin{abstract}
Let $q$ be an odd prime power, and let $A_n=(a_{ij})\in\mathbb F_q^{n\times n}$ be a random matrix whose entries are independent and uniformly distributed on $\mathbb F_q$. The permanent of $A_n$ is defined by $\operatorname{per}(A_n)=\sum_{\sigma\in S_n}\prod_{i=1}^n a_{i,\sigma(i)}$, where $S_n$ denotes the symmetric group on $[n]$. Ghasemi, Gross, and Kopparty conjectured the zero-mass asymptotic $\Pr[\operatorname{per}(A_n)=0]=1/q+o(1)$ for every fixed odd prime power $q$, and Hunter, Kwan, and Sauermann subsequently stated its equivalent full-distribution formulation: for every fixed $q$ and every $x\in\mathbb F_q$,
\[
\lim_{n\to\infty}\Pr[\operatorname{per}(A_n)=x]=\frac1q.
\]
In this paper, we prove this conjecture. More precisely, we prove that there is an absolute constant $C>0$ such that \[\frac12\sum_{x\in\mathbb F_q}\left|\Pr[\operatorname{per}(A_n)=x]-\frac1q\right|\le C\frac{\log n}{n}\] for every odd prime power $q$ and every $n\ge 7$. The estimate is uniform in $q$, so the conclusion remains valid for every sequence $q=q(n)$ of odd prime powers.
\end{abstract}

\section{Introduction}\label{introduction}
 For a matrix $A=(a_{ij})\in\mathbb F_q^{n\times n}$, the determinant and the permanent of $A$ are defined by
\begin{equation}
\det(A)=\sum_{\sigma\in S_n}\operatorname{sgn}(\sigma)\prod_{i=1}^n a_{i,\sigma(i)}
\qquad\text{and}\qquad
\operatorname{per}(A)=\sum_{\sigma\in S_n}\prod_{i=1}^n a_{i,\sigma(i)}.
\label{definitions}
\end{equation}
Standard references for the permanent and its relation to the determinant include the monograph of Minc and the classical work of Marcus and Minc \cite{Minc1978,MarcusMinc1961}. The similarity in \eqref{definitions} conceals a basic structural difference. The determinant is controlled by linear algebra, while the permanent generally is not.

This distinction is already visible for random matrices over finite fields. A uniform matrix in $\mathbb F_q^{n\times n}$ is singular with probability $1-\prod_{j=1}^n(1-q^{-j})$. Conditional on invertibility, its determinant is uniform on $\mathbb F_q^\times$. Thus the limiting determinant distribution has mass $\alpha_q=1-\prod_{j=1}^{\infty}(1-q^{-j})$ at zero and mass $(1-\alpha_q)/(q-1)$ at every nonzero value. Fulman's survey gives a broad account of random matrix theory over finite fields \cite{Fulman2002}. For matrices whose entries are independent but not uniform, Kahn and Koml\'os established far-reaching universality for singularity probabilities, and Luh, Meehan, and Nguyen developed modern inverse and equidistribution methods for rank and spectral statistics \cite{KahnKomlos2001,LuhMeehanNguyen2021}. These determinant and rank results provide an important benchmark, but they rely on linear structure that is unavailable for the permanent. Even in the uniform model, the limiting determinant distribution is not uniform for fixed $q$ because $\alpha_q>1/q$.

In characteristic $2$, the signs in the determinant disappear and $\operatorname{per}(A)=\det(A)$. If $q$ is even and $p_{n,q}=\prod_{j=1}^n(1-q^{-j})$, then $\Pr[\operatorname{per}(A)=0]=1-p_{n,q}$ and every nonzero value has probability $p_{n,q}/(q-1)$. Consequently, the permanent cannot become uniform on a fixed field of characteristic $2$. Odd characteristic is therefore necessary for the question studied here.

The permanent is central in enumerative combinatorics and computational complexity. Valiant proved that its exact computation is $\#\mathrm P$-complete \cite{Valiant1979}, while Jerrum, Sinclair, and Vigoda obtained a fully polynomial randomized approximation scheme for matrices with nonnegative entries \cite{JerrumSinclairVigoda2004}. Over large finite fields, the permanent polynomial also became a principal example in random self-reducibility and polynomial correction. This line includes work of Lipton, Gemmell and Sudan, and Gemmell, Lipton, Rubinfeld, Sudan, and Wigderson \cite{Lipton1991,GemmellSudan1992,GemmellLiptonRubinfeldSudanWigderson1991}. Feige and Lund established strong average-case hardness results for random matrices over suitably large prime fields \cite{FeigeLund1996}. 
There is a parallel probabilistic literature on permanents with discrete random entries. Tao and Vu proved a strong nonvanishing result for random Bernoulli matrices \cite{TaoVu2009}. Kwan and Sauermann treated random symmetric matrices \cite{KwanSauermann2022}. More recently, Hunter, Kwan, and Sauermann proved exponential anticoncentration under a general atom bound \cite{HunterKwanSauermann2025}. Those works take values in characteristic zero and address nonvanishing or anticoncentration. The finite-field problem asks instead whether all field values eventually receive nearly equal mass.

Another direct predecessor comes from P\'olya's permanent problem, which asks when signs can transform a permanent into a determinant. The subject includes the structural theory of Pfaffian orientations developed by Robertson, Seymour, and Thomas \cite{RobertsonSeymourThomas1999}. Over finite fields, Dolinar, Guterman, Kuzma, and Orel studied the corresponding determinant-permanent relation \cite{DolinarGutermanKuzmaOrel2011}. Budrevich and Guterman proved that, for odd $q$ and $n\geq3$, the permanent vanishes on fewer matrices than the determinant, and Bassalygo later gave a shorter proof \cite{BudrevichGuterman2012,Bassalygo2013}. Budrevich subsequently obtained further enumerative estimates for matrices with nonzero permanent \cite{Budrevich2018}. These results establish a strict comparison, although their gap does not identify the limiting permanent distribution.

Finite-field permanents have also been studied in other asymptotic regimes. Lyapkov and Sevast'yanov \cite{LyapkovSevastyanov1996} proved a limit theorem for the generalized permanent of a random $n\times m$ matrix over $\operatorname{GF}(p)$ with independent rows, in the regime $n\to\infty$ with $m$ fixed. Its limiting law is a mixture of a point mass at zero and the uniform law on $\operatorname{GF}(p)$. Vinh \cite{Vinh2012} obtained value-set and distributional estimates in fixed-dimension, growing-field regimes, including matrices with entries restricted to large subsets. These results concern, respectively, a fixed-width rectangular limit and fixed-dimension growing-field or restricted-entry regimes. 
For square matrices over a fixed finite field, the earliest explicit question known to us is Kopparty's Problem~9 from the 2017 Oberwolfach workshop on combinatorics. It asks for $\Pr[\operatorname{per}(A_n)=0]$ when $A_n$ is a random $n\times n$ matrix over $\mathbb F_3$ \cite{KoppartyProblem2017}. Scheinerman later gave extensive computational evidence for asymptotic uniformity in this case \cite{Scheinerman2024}.

Ghasemi, Gross, and Kopparty subsequently formulated the general fixed-odd-field conjecture in the zero-mass form $\Pr[\operatorname{per}(A_n)=0]=1/q+o(1)$ \cite{GhasemiGrossKopparty2025}. Their main theorem treats random $n\times k$ matrices with $k\leq0.1\sqrt n$ and therefore does not reach the square case $k=n$. Hunter, Kwan, and Sauermann later stated the equivalent full-distribution formulation \cite[Conjecture~1.2]{HunterKwanSauermann2026}. Indeed, multiplying one fixed row by an element of $\mathbb F_q^\times$ is a measure-preserving bijection and multiplies the permanent by the same element. Hence all nonzero values have the same probability, and convergence of the zero mass to $1/q$ is equivalent to convergence of the entire distribution to the uniform law. 

\begin{conjecture}[Finite-field permanent uniformity, see \cite{GhasemiGrossKopparty2025,HunterKwanSauermann2026}]\label{uniformityconjecture}
Fix a finite field $\mathbb F_q$ of odd characteristic. Then, for a uniformly random $n\times n$ matrix $A_n\in\mathbb F_q^{n\times n}$ and every $x\in\mathbb F_q$, one has
\begin{equation}
\lim_{n\to\infty}\Pr[\operatorname{per}(A_n)=x]=\frac1q.
\label{uniformitystatement}
\end{equation}
\end{conjecture}

Hunter, Kwan, and Sauermann proved $\Pr[\operatorname{per}(A_n)=0]\geq1/q$. They also proved that $\Pr[\operatorname{per}(A_n)=0]\leq1/q+C/q^3$ for some absolute constant $C>0$ and $n\geq3$, and $\limsup_{n\to\infty}\Pr[\operatorname{per}(A_n)=0]\leq\alpha_q-1/(50q^2)$ \cite[Theorem~1.3]{HunterKwanSauermann2026}. The first estimate is strong when the field is large, while neither estimate tends to $1/q$ with $n$ when the field is fixed.

For probability measures $\mu$ and $\nu$ on $\mathbb F_q$, write $d_{\mathrm{TV}}(\mu,\nu)=\frac12\sum_{x\in\mathbb F_q}|\mu(x)-\nu(x)|$. We write $\mathsf U_q$ for the uniform probability measure on $\mathbb F_q$ and $\mathcal L(X)$ for the law of a random variable $X$. Our main result resolves the fixed-field square-matrix conjecture, with an estimate uniform in the field size. 

\begin{samepage}
\begin{theorem}\label{maintheorem}
Let $q$ be an odd prime power, let $n\geq1$, and let $A_n\in\mathbb F_q^{n\times n}$ be uniformly random. For every integer $t$ with $0\leq t\leq n-1$, one has
\begin{equation}
d_{\mathrm{TV}}\!\left(\mathcal L(\operatorname{per}(A_n)),\mathsf U_q\right)
\leq
\frac{(q-1)(2q-1)}{q^3}\frac{t}{n}
+
\frac{1}{q^2}\left(1-\left(1-\frac{1}{q}\right)^3\right)^t.
\label{parameterbound}
\end{equation}
\end{theorem}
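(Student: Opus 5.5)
The plan is to reduce the total-variation distance to a single ``degeneracy'' probability, and then bound that probability by running a $t$-step exploration of the matrix.

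\textbf{Reduction to the first-row cofactors.} Expand along the first row: $\operatorname{per}(A_n)=\sum_j a_{1j}c_j$, where $c_j=\operatorname{per}(A_n^{(1,j)})$ depends only on rows $2,\dots,n$. Conditional on those rows, this is a linear form in the independent uniform row $(a_{1j})_j$. It is therefore exactly uniform on $\mathbb F_q$ unless the cofactor vector $c=(c_1,\dots,c_n)$ vanishes, in which case the permanent is $0$. Writing $\beta_n=\Pr[c=0]$, this gives
\[
\Pr[\operatorname{per}(A_n)=0]=\frac1q+\Bigl(1-\frac1q\Bigr)\beta_n .
\]
Together with the row-scaling symmetry noted before Conjecture~\ref{uniformityconjecture}, it also gives $d_{\mathrm{TV}}=(1-\frac1q)^2\beta_n\le(1-\frac1q)\beta_n$. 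So it suffices to show that $(1-\frac1q)\beta_n$ is at most the right-hand side of \eqref{parameterbound}. Note that $\frac{(q-1)(2q-1)}{q^3}=(1-\frac1q)\bigl(1-(1-\frac1q)^2\bigr)$, which suggests which events to aim for.

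\textbf{Exploration to show $\beta_n$ is small.} Expose the remaining rows one at a time, say rows $2,3,\dots,t+1$, and track cofactors of increasing order. Expanding $c_j$ along row $2$ gives $c_j=\sum_{k\neq j}a_{2k}\,d_{jk}$, where $d_{jk}$ is the permanent of the minor with rows $1,2$ and columns $j,k$ deleted. I would choose, greedily, a few columns: a column $j_1$ not yet used, together with two further fresh columns. The three linear forms in the fresh row that must vanish for $c$ to vanish on those coordinates involve disjoint sets of fresh entries, up to the shared columns. Conditional on the lower-order minors not all vanishing, each of the three forms is nonzero with probability $1-\frac1q$ independently. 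So a ``good'' step fails with probability at most $1-(1-\frac1q)^3$, and the first-row and second-row factors supply the prefactor $\frac1{q^2}$. Iterating over $t$ steps gives the geometric term
\[
\frac1{q^2}\Bigl(1-\bigl(1-\tfrac1q\bigr)^3\Bigr)^t .
\]

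\textbf{The error term, and the main obstacle.} The term $\frac{t}{n}$ should account for a step failing because the freshly chosen columns collide with columns already used, or because exchangeability of the columns breaks. Randomizing the column choices uniformly among the $n$ columns, each of the $t$ steps suffers such a collision with probability of order $1/n$. The probability that the relevant two of the three forms are then both forced to vanish is $(1-\frac1q)\bigl(1-(1-\frac1q)^2\bigr)$; summing over steps gives the first term of \eqref{parameterbound}.

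I expect the main obstacle to be making the conditional independence in the exploration rigorous. The minors $d_{jk}$ at step $s$ are themselves permanents of matrices that share entries with the earlier forms. One therefore needs the right inductive event, such as ``some order-$s$ cofactor indexed by unexplored columns is nonzero'', that is preserved from step to step and whose failure probability is controlled by fresh randomness only. I would first try to define this event via a symmetric (column-exchangeable) invariant, so that averaging over the column choice produces exactly the $t/n$ loss.
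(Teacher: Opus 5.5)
Your reduction is the same as the paper's (which expands along the last row rather than the first), with one slip. The total-variation distance is exactly $(1-\frac1q)\beta_n$, not $(1-\frac1q)^2\beta_n$: the atom at $0$ has excess $(1-\frac1q)\beta_n$, and each of the $q-1$ nonzero atoms has deficit $\beta_n/q$. Since you go on to bound $(1-\frac1q)\beta_n$ anyway, nothing is lost.

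The genuine gap is everything after that. The exploration is not an argument that $\beta_n$ is small, and that is the whole content of the theorem. The elementary rank bound already gives $\beta_n\le\frac{q^{-1}}{q-1}$, but this does not decay in $n$. Concretely, three things go wrong.
\begin{enumerate}
\item The forms $c_j=\sum_{k\neq j}a_{2k}d_{jk}$ for different $j$ share $n-2$ of their $n-1$ entries of row $2$. So they are not independent just because their coefficient vectors are nonzero. What governs $\Pr[c=0\mid\text{rows }3,\dots,n]$ is $q^{-\mathrm{rank}}$ of the map $a_2\mapsto(c_j)_j$, and your plan contains no way to show that this rank is typically large.
\item The coefficients $d_{jk}$ are functions of all deeper rows. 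Once you condition on them, only one fresh row remains, so there is no sequence of $t$ steps each supplying new independent randomness.
\item The event ``at least one of three forms vanishes'' (probability $1-(1-\frac1q)^3$) is not what $c=0$ requires, since one nonzero coordinate suffices for $c\neq0$. You give no reason why $c=0$ should force such a failure at each of $t$ steps.
\end{enumerate}
The constants $\frac1{q^2}$, $1-(1-\frac1q)^3$ and $\frac tn$ are being matched to the target rather than derived.

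The missing idea is a mechanism that turns small rank into a rare, self-similar event. The paper works in $R_n=\mathbb F_q[x_1,\dots,x_n]/(x_1^2,\dots,x_n^2)$ and lets $G$ be the product of all but the last of the random linear forms. Then $\gamma_{n,k}=\mathbb E[q^{-s(G)}]$, where $s(G)=\mathrm{rank}(u\mapsto uG)$. The classes $v_i$ of $x_i$ in $U=(R_n)_1/\ker T_G$ all lie on the nonzero quadric $f(u)=\lambda(u^2G)$, because $x_i^2=0$. The quadric zero count then allows at most $(2q-1)q^{s-2}$ distinct $v_i$, and Cauchy--Schwarz converts small $s$ into many collisions $v_i=v_j$. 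By $\operatorname{Ann}(x_i-x_j)=(x_i+x_j)R_n$, an off-diagonal collision is exactly the vanishing of the analogous product in $R_{n-1}$. This gives
\[
\gamma_{n,k}\le a\Bigl(\frac1n+\frac{n-1}n\gamma_{n-1,k}\Bigr)+b\gamma_{n,k+1},\qquad a=\frac{2q-1}{q^2},\quad b=\frac{(q-1)^2}{q^2}.
\]
The constants then arise quite differently from your heuristics:
\begin{enumerate}
\item $\frac tn$ comes from the trivial diagonal collisions $i=j$, accumulated over $t$ iterations.
\item $1-(1-\frac1q)^3=a+\frac bq$, because each $b$-step increases $k$ and so gains a factor $\frac1q$ in the final tail bound $\gamma_{n,k}\le\frac{q^{-k}}{q-1}$.
\item $\frac1{q^2}=(1-\frac1q)\frac{q^{-1}}{q-1}$.
\end{enumerate}
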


\begin{corollary}\label{uniformitycorollary}
Conjecture~\ref{uniformityconjecture} holds. More precisely, for every odd prime power $q$, every $n\geq7$, and uniformly random $A_n\in\mathbb F_q^{n\times n}$, one has
\begin{equation}
d_{\mathrm{TV}}\!\left(\mathcal L(\operatorname{per}(A_n)),\mathsf U_q\right)
\leq
\frac{1}{n}\left[
\frac{10}{27}\left(\frac{\log n}{\log(27/19)}+1\right)+\frac{1}{9}
\right].
\label{uniformbound}
\end{equation}
Consequently, the total-variation distance is $O((\log n)/n)$ uniformly over all odd prime powers. The same conclusion holds for every sequence of odd prime powers $q=q(n)$.
\end{corollary}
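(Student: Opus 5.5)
The plan is to deduce the corollary from Theorem~\ref{maintheorem} by choosing $t$ and then bounding each term uniformly in $q$.

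First, bound the coefficients uniformly over odd $q\ge 3$. The prefactor $\frac{(q-1)(2q-1)}{q^3}$ is decreasing for $q\ge3$: writing it as $2/q-3/q^2+1/q^3$, its derivative in $q$ is $-2/q^2+6/q^3-3/q^4$, and this is negative for $q\ge 3$ because $-2q^2+6q-3<0$ there. So the prefactor is at most its value at $q=3$, namely $2\cdot5/27=10/27$. Similarly, $1-(1-1/q)^3$ is decreasing in $q$, so it is at most $1-8/27=19/27$. Also $1/q^2\le 1/9$. Substituting these into \eqref{parameterbound} gives, for every admissible $t$,
\[
d_{\mathrm{TV}}\le \frac{10}{27}\frac{t}{n}+\frac19\left(\frac{19}{27}\right)^t .
\]

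Next, choose $t=\ceil{\log n/\log(27/19)}$. Then $(19/27)^t\le 1/n$, so the second term is at most $\frac{1}{9n}$. In the first term, $t\le \log n/\log(27/19)+1$. Together these give exactly \eqref{uniformbound}.

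The only step needing care is admissibility, $t\le n-1$. We have $\log(27/19)\approx0.351$. At $n=7$, $\log 7/0.351\approx5.54$, so $t=6=n-1$. For larger $n$, the difference $n-1-\log n/\log(27/19)$ is increasing, since its derivative $1-1/(n\log(27/19))$ is positive once $n\ge3$. Hence $t\le n-1$ for all $n\ge7$; this explains the threshold in the statement.

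Finally, the right-hand side of \eqref{uniformbound} is $O((\log n)/n)$ and does not depend on $q$. It therefore tends to zero for every fixed odd $q$, which proves Conjecture~\ref{uniformityconjecture}, since $|\Pr[\operatorname{per}(A_n)=x]-1/q|\le 2d_{\mathrm{TV}}$. For the same reason it also tends to zero along any sequence $q=q(n)$ of odd prime powers.
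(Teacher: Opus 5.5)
Your proposal is correct and follows the paper's proof: you bound the two $q$-dependent factors by their values at $q=3$, take $t=\lceil \log n/\log(27/19)\rceil$, and check that $t\le n-1$ for $n\ge 7$. You also supply the monotonicity and admissibility verifications that the paper only asserts, and your factor $2$ in $|\Pr[\operatorname{per}(A_n)=x]-1/q|\le 2d_{\mathrm{TV}}$ is harmless but unnecessary, since a single point's deviation is already at most $d_{\mathrm{TV}}$.
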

\end{samepage}

The estimates of Hunter, Kwan, and Sauermann and Theorem~\ref{maintheorem} are complementary. Their proof gives error at most $11/q^3$ in the zero probability for $n\geq3$, while \eqref{uniformbound} gives decay in $n$ uniformly in $q$. The exact mixture identity proved in Section~\ref{iterationsection} shows that this zero-probability error is also the total-variation distance. Taking the better of the two estimates gives useful control throughout the two-parameter range. When $q$ is large relative to $n$, the bound of Hunter, Kwan, and Sauermann is often stronger. When $q$ is fixed or changes without tending to infinity, \eqref{uniformbound} supplies the missing decay in the matrix dimension. The latter feature is essential for $q=3$, the case of Kopparty's 2017 problem, and it also permits arbitrary oscillation of $q=q(n)$ among odd prime powers.

The proof starts from a commutative algebra whose generators have square zero. A product of random linear forms in this algebra simultaneously records all maximal permanental minors. A nonzero intermediate product determines a quotient of the degree-one part. The coordinate classes in that quotient lie on a nonzero quadratic hypersurface. A sharp finite-field zero count forces many coordinate collisions. The identity $\operatorname{Ann}(x_i-x_j)=(x_i+x_j)R_n$ converts every off-diagonal collision into the vanishing of a lower-dimensional product. This gives a recurrence in the matrix dimension and the number of omitted rows. Its iteration for $t$ steps produces \eqref{parameterbound}, and a choice of $t$ of order $\log n$ gives Corollary~\ref{uniformitycorollary}.

\medskip
The rest of the paper is organized as follows. 
Section~\ref{algebraicsection} develops the algebraic encoding and the two zero estimates. Section~\ref{collisionsection} proves the collision identity and the dimension-reduction recurrence. Section~\ref{iterationsection} iterates the recurrence and completes the proof.

\section{Algebraic encoding and zero estimates}\label{algebraicsection}
Fix an odd prime power $q$. For each positive integer $n$, write $[n]=\{1,\ldots,n\}$ and define $R_n=\mathbb F_q[x_1,\ldots,x_n]/(x_1^2,\ldots,x_n^2)$. We use $x_i$ also for the image of the indeterminate $x_i$ in $R_n$, so $x_i^2=0$ for every $i\in[n]$. For each $S\subseteq[n]$, let $x_S=\prod_{j\in S}x_j$, with $x_\varnothing=1$. Every monomial in which some variable occurs more than once is zero in $R_n$, and hence every element of $R_n$ has a unique expression $\sum_{S\subseteq[n]}c_Sx_S$ with $c_S\in\mathbb F_q$. It follows that the elements $x_S$ with $S\subseteq[n]$ form an $\mathbb F_q$-basis of $R_n$. For $0\leq d\leq n$, let $(R_n)_d$ be the $\mathbb F_q$-span of the elements $x_S$ with $|S|=d$ and 
$(R_n)_d={0}$ for $d>n$. For $S,T\subseteq[n]$, one has $x_Sx_T=0$ when $S\cap T\neq\varnothing$ and $x_Sx_T=x_{S\cup T}$ when $S\cap T=\varnothing$. Consequently, when a product of elements of $(R_n)_1$ is expanded, the terms that use any variable more than once vanish, while the coefficients of the remaining monomials are permanents of the corresponding submatrices. The following lemma gives the precise identity used in the proof, with related formulations appearing in \cite{FeinsilverMcSorley2011}.

\begin{lemma}\label{encodinglemma}
Let $1\leq m\leq n$, let $M=(m_{ij})$ be an $m\times n$ matrix over $\mathbb F_q$, and let $L_i=\sum_{j=1}^n m_{ij}x_j$. If $M_S$ denotes the submatrix formed by the columns indexed by $S$, then
\begin{equation}
L_1\cdots L_m
=
\sum_{\substack{S\subseteq[n]\\|S|=m}}
\operatorname{per}(M_S)x_S.
\label{encoding}
\end{equation}
\end{lemma}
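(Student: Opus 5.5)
We expand the product $L_1\cdots L_m$ directly by multilinearity. This uses only the multiplication rules for the monomials $x_S$ recorded just before the lemma, namely that $x_Sx_T=0$ when $S\cap T\neq\varnothing$ and $x_Sx_T=x_{S\cup T}$ otherwise.

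First, distributivity in the commutative ring $R_n$ gives
\[
L_1\cdots L_m=\sum_{f:[m]\to[n]}\Bigl(\prod_{i=1}^m m_{i,f(i)}\Bigr)\,x_{f(1)}x_{f(2)}\cdots x_{f(m)}.
\]
The sum runs over all functions $f$, since each factor $L_i$ contributes one term $m_{i,f(i)}x_{f(i)}$. If $f$ is not injective, some variable $x_j$ appears twice in the monomial. Commutativity together with $x_j^2=0$ then kills that term. If $f$ is injective, the monomial equals $x_{f([m])}$, where $f([m])$ is a set of size $m$.

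Second, we group the injective maps by their image $S=f([m])$, where $|S|=m$. Write $S=\{s_1<\cdots<s_m\}$. The injective maps $[m]\to[n]$ with image $S$ correspond bijectively to permutations $\sigma\in S_m$ via $f(i)=s_{\sigma(i)}$. The coefficient of $x_S$ is therefore
\[
\sum_{\sigma\in S_m}\prod_{i=1}^m m_{i,s_{\sigma(i)}}.
\]
This is exactly $\operatorname{per}(M_S)$, because the $(i,k)$ entry of $M_S$ is $m_{i,s_k}$.

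Summing over all $S$ with $|S|=m$ gives \eqref{encoding}. No step here is a real obstacle. The only points needing care are the use of commutativity, which lets us reorder a monomial so that the repeated variable appears as a square, and the bookkeeping of the bijection between injections with image $S$ and permutations. The hypothesis $m\le n$ only guarantees that sets $S$ of size $m$ exist.
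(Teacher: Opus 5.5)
Your proof is correct and follows essentially the same route as the paper: expand $L_1\cdots L_m$ over all maps $f\colon[m]\to[n]$, discard the non-injective ones because of a repeated factor $x_j^2=0$, and then identify the injections with image $S=\{s_1<\cdots<s_m\}$ with permutations $\sigma\in S_m$ via $f(i)=s_{\sigma(i)}$. Summing over these injections gives exactly $\operatorname{per}(M_S)$ as the coefficient of $x_S$.
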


\begin{proof}
Expanding $L_1\cdots L_m$ requires choosing one summand from each factor $L_i=\sum_{j=1}^n m_{ij}x_j$. Such a choice is determined by a map $\phi$ from $[m]$ to $[n]$, where the summand chosen from $L_i$ is $m_{i,\phi(i)}x_{\phi(i)}$. The resulting term is $\left(\prod_{i=1}^m m_{i,\phi(i)}\right)\left(\prod_{i=1}^m x_{\phi(i)}\right)$. Suppose that $\phi$ is not injective. Then there are distinct $r,s\in[m]$ such that $\phi(r)=\phi(s)=j$ for some $j\in[n]$. The monomial $\prod_{i=1}^m x_{\phi(i)}$ then contains the factor $x_j^2$, so it is zero in $R_n$. Hence only injective maps can contribute nonzero terms.

Suppose that $\phi$ is injective and let $S=\{\phi(1),\ldots,\phi(m)\}$. Then $|S|=m$, and since the variables commute, $\prod_{i=1}^m x_{\phi(i)}=\prod_{j\in S}x_j=x_S$. Thus the coefficient of $x_S$ is obtained by summing the scalar coefficients associated with all injective maps whose image is $S$. Fix $S\subseteq[n]$ with $|S|=m$ and write $S=\{s_1<\cdots<s_m\}$. Every injective map from $[m]$ to $[n]$ with image $S$ has the form $\phi(i)=s_{\sigma(i)}$ for a unique permutation $\sigma\in S_m$. Since the columns of $M_S$ are the columns of $M$ indexed by $s_1,\ldots,s_m$ in this order, the coefficient of $x_S$ is $\sum_{\sigma\in S_m}\prod_{i=1}^m m_{i,s_{\sigma(i)}}=\operatorname{per}(M_S)$. Summing over all subsets $S\subseteq[n]$ with $|S|=m$ gives $L_1\cdots L_m=\sum_{\substack{S\subseteq[n]\\|S|=m}}\operatorname{per}(M_S)x_S$, which proves \eqref{encoding}.
\end{proof} 

For integers $1\leq k\leq n$, let $L_1,\ldots,L_{n-k}$ be independent and uniformly distributed in $(R_n)_1$, and set $\gamma_{n,k}=\Pr[L_1\cdots L_{n-k}=0]$. Since the field $\mathbb F_q$ is fixed throughout this section, we omit $q$ from the notation. When $k=n$, the product is empty and equals $1$, so $\gamma_{n,n}=0$.

The first inequality in the following lemma is Corollary~2.4 of Hunter, Kwan, and Sauermann \cite{HunterKwanSauermann2026}, rewritten in the present notation using Lemma~\ref{encodinglemma}. We include a proof in the present algebraic formulation for the reader's convenience. 

\begin{lemma}\label{taillemma}
For $1\leq k\leq n$, one has
\begin{equation}
\gamma_{n,k}
\leq
1-\prod_{j=k+1}^n(1-q^{-j})
\leq
\frac{q^{-k}}{q-1}.
\label{tailbound}
\end{equation}
\end{lemma}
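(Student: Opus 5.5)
The plan is to expose the rows one at a time and show that, as long as the partial product is nonzero, the next uniformly random linear form kills it only with small probability. Write $m=n-k$ and $P_i=L_1\cdots L_i\in(R_n)_i$ for $0\le i\le m$, with $P_0=1$. Since $P_{i+1}=P_iL_{i+1}$, the event $P_m\neq0$ contains the event that $P_i\neq0$ and $P_iL_{i+1}\neq0$ for every $i<m$. Conditional on $L_1,\ldots,L_i$ with $P_i\neq0$, the set
\[
V(P_i)=\{L\in(R_n)_1: P_iL=0\}
\]
is an $\mathbb F_q$-subspace of the $n$-dimensional space $(R_n)_1$. Because $L_{i+1}$ is independent of $L_1,\ldots,L_i$ and uniform on $(R_n)_1$, the conditional probability that $P_{i+1}=0$ equals $q^{\dim V(P_i)-n}$.

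The key step is the following annihilator bound: if $P\in(R_n)_i$ is nonzero, then $\dim V(P)\le i$. Granting it, each conditional failure probability is at most $q^{i-n}$. Multiplying the conditional success probabilities gives
\[
1-\gamma_{n,k}\ \ge\ \prod_{i=0}^{m-1}(1-q^{i-n})\ =\ \prod_{j=k+1}^{n}(1-q^{-j}),
\]
which is the first inequality in \eqref{tailbound}. The second inequality follows from $1-\prod_j(1-a_j)\le\sum_j a_j$ for $a_j\in[0,1]$, together with $\sum_{j\ge k+1}q^{-j}=q^{-k}/(q-1)$.

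To prove the annihilator bound, set $d=\dim V(P)$ and choose a basis $\ell_1,\ldots,\ell_d$ of $V(P)$ in reduced row echelon form with respect to the coordinates $x_1,\ldots,x_n$. Let the pivot set be $J=\{j_1,\ldots,j_d\}$, so that
\[
\ell_r=x_{j_r}+\sum_{t\notin J}a_{r,t}x_t .
\]
Write $P=\sum_{|S|=i}c_Sx_S$, and choose $S$ with $c_S\neq0$ such that $|S\cap J|$ is as large as possible among the support of $P$. Suppose some pivot $j_r\notin S$, and take $T=S\cup\{j_r\}$. The coefficient of $x_T$ in $P\ell_r$ is
\[
c_S+\sum_{t\in S\setminus J}a_{r,t}\,c_{(S\setminus\{t\})\cup\{j_r\}}.
\]
Each set $(S\setminus\{t\})\cup\{j_r\}$ appearing here has one more pivot coordinate than $S$. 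By the maximality of $S$, every such coefficient $c_{(S\setminus\{t\})\cup\{j_r\}}$ vanishes. Since $P\ell_r=0$, the displayed coefficient must be zero, which forces $c_S=0$, a contradiction. Hence $J\subseteq S$, and so $d\le|S|=i$.

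I expect this annihilator bound to be the main obstacle. The naive idea, that linearly independent elements of $V(P)$ have a nonzero product, is false: for example $(x_1+x_2)(x_1-x_2)=0$. This is why I plan to use the echelon-form basis combined with the extremal choice of the monomial $S$, rather than any argument based on products of basis elements.
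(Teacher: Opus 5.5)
Your proof is correct. Its outer structure matches the paper's: expose the forms one at a time, bound each conditional failure probability by $q^{i-n}$, multiply, and finish with $1-\prod_j(1-a_j)\le\sum_j a_j$.

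The difference lies in how you prove the annihilator bound $\dim V(P)\le i$. The paper argues on the image side. It takes an arbitrary $S$ with $c_S\neq0$ and shows that the $n-i$ elements $x_jP$, $j\notin S$, are linearly independent. The reason is that the coefficient of $x_{S\cup\{j\}}$ in $x_jP$ is $c_S$, while $x_{j'}P$ with $j'\notin S$, $j'\neq j$, cannot contribute to that monomial at all. Rank--nullity then gives $\dim V(P)\le i$. Because no cross terms arise, any support set works, and no echelon basis or extremal choice is needed.

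Your kernel-side argument must handle the cross terms $a_{r,t}\,c_{(S\setminus\{t\})\cup\{j_r\}}$. That is exactly why you need the reduced echelon basis and the choice of $S$ maximizing $|S\cap J|$. Your coefficient computation and the maximality step are both correct. In exchange, your route gives slightly more structure: for any ordering of the coordinates, the pivot set of the annihilator lies inside some support set of $P$.

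The paper's formulation instead directly yields the lower bound $n-i$ on the rank of multiplication by $P$. That is the form it reuses in Lemma~\ref{recurrenceproposition} to obtain $s(G)\ge k+1$; your bound gives the same via rank--nullity.
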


\begin{proof}
When $k=n$, one has $\gamma_{n,n}=0$ and $\prod_{j=n+1}^n(1-q^{-j})=1$, so the result follows. We may therefore assume that $1\leq k\leq n-1$. Set $P_d=L_1\cdots L_d$ for $0\leq d\leq n-k$, with $P_0=1$. Since $\gamma_{n,k}=\Pr[P_{n-k}=0]$, we estimate $\Pr[P_{n-k}\neq0]$ by controlling the probability that a nonzero product becomes zero when the next random linear form is multiplied in.

Fix $0\leq d\leq n-k-1$ and let $0\neq F\in(R_n)_d$. Choose $S\subseteq[n]$ such that the coefficient $c_S$ of $x_S$ in $F$ is nonzero. Since $F$ is homogeneous of degree $d$, one has $|S|=d$. We show that the elements $x_jF$ with $j\notin S$ are linearly independent. Suppose that $\sum_{j\notin S}\alpha_jx_jF=0$ and fix $j\notin S$. The coefficient of $x_{S\cup\{j\}}$ in $x_jF$ is $c_S$. If $j'\notin S$ and $j'\neq j$, then every nonzero monomial in $x_{j'}F$ contains $x_{j'}$, while $x_{S\cup\{j\}}$ does not contain $x_{j'}$. Hence $x_{j'}F$ does not contribute to the coefficient of $x_{S\cup\{j\}}$. Comparing this coefficient in the assumed linear relation gives $\alpha_jc_S=0$. Since $c_S\neq0$, we have $\alpha_j=0$. This holds for every $j\notin S$, so the $n-d$ elements $x_jF$ with $j\notin S$ are linearly independent.

Multiplication by $F$ therefore defines a linear map from $(R_n)_1$ to $(R_n)_{d+1}$ of rank at least $n-d$. Since $(R_n)_1$ has dimension $n$, the kernel of this map has dimension at most $d$. If $L$ is uniformly distributed in $(R_n)_1$, then
\[
\Pr[LF=0]
=
\frac{\bigl|\{L\in(R_n)_1\mid LF=0\}\bigr|}{\bigl|(R_n)_1\bigr|}\leq
\frac{q^d}{q^n}
=
q^{-(n-d)}.
\]

We now apply this estimate with $F=P_d$. On the event $P_d\neq0$, the element $P_d$ is a nonzero member of $(R_n)_d$. Moreover, $L_{d+1}$ is independent of $P_d$ and remains uniformly distributed in $(R_n)_1$. It follows that
\[
\begin{aligned}
\Pr[P_{d+1}=0\mid P_d\neq0]
&=
\sum_{\substack{F\in(R_n)_d\\F\neq0}}
\Pr[P_d=F\mid P_d\neq0]\Pr[L_{d+1}F=0]\\
&\leq
q^{-(n-d)}
\sum_{\substack{F\in(R_n)_d\\F\neq0}}
\Pr[P_d=F\mid P_d\neq0]=
q^{-(n-d)}.
\end{aligned}
\]
Consequently, $\Pr[P_{d+1}\neq0\mid P_d\neq0]\geq1-q^{-(n-d)}$. Since $P_{d+1}\neq0$ implies $P_d\neq0$, we have that
\[
\begin{aligned}
1-\gamma_{n,k}
&=
\Pr[P_{n-k}\neq0]=
\prod_{d=0}^{n-k-1}
\Pr[P_{d+1}\neq0\mid P_d\neq0]\\ &\geq 
\prod_{d=0}^{n-k-1}\left(1-q^{-(n-d)}\right)=
\prod_{j=k+1}^n(1-q^{-j}).
\end{aligned}
\]
Taking complements proves the first inequality in \eqref{tailbound}. Finally, by expanding the difference between successive partial products we obtain that
\[
1-\prod_{j=k+1}^n(1-q^{-j})
=
\sum_{j=k+1}^n
q^{-j}\prod_{\ell=k+1}^{j-1}(1-q^{-\ell})\leq
\sum_{j=k+1}^n q^{-j}\leq 
\frac{q^{-k}}{q-1}.
\]
This proves the second inequality and completes the proof.
\end{proof}

We next give the character identities used in the quadratic zero count. Write $\mathbb F_q^\times=\mathbb F_q\setminus\{0\}$ and $\mathbb C^\times=\mathbb C\setminus\{0\}$. A character of a finite abelian group $G$ is a group homomorphism from $G$ to the multiplicative group $\mathbb C^\times$. It is nontrivial if it is not identically equal to $1$. An additive character of $\mathbb F_q$ is a character of the additive group $(\mathbb F_q,+)$. Fix a nontrivial additive character $\psi$, so that $\psi(u+v)=\psi(u)\psi(v)$ for all $u,v\in\mathbb F_q$. The nonzero squares form a subgroup of index two in $\mathbb F_q^\times$. Define $\chi\colon\mathbb F_q^\times\to\{-1,1\}$ by setting $\chi(u)=1$ if $u=v^2$ for some $v\in\mathbb F_q^\times$, and $\chi(u)=-1$ otherwise. Then $\chi(uv)=\chi(u)\chi(v)$ for all $u,v\in\mathbb F_q^\times$, so $\chi$ is a character of $\mathbb F_q^\times$. It is called the quadratic character of $\mathbb F_q$. We extend $\chi$ to $\mathbb F_q$ by setting $\chi(0)=0$.

\begin{lemma}\label{lem:character-identities}
For every $u\in\mathbb F_q$,
\[
\frac{1}{q}\sum_{t\in\mathbb F_q}\psi(tu)=
\begin{cases}
1,&u=0,\\
0,&u\neq0.
\end{cases}
\]
Let $\mathfrak g=\sum_{z\in\mathbb F_q}\chi(z)\psi(z)$. Then $\sum_{y\in\mathbb F_q}\psi(cy^2)=\chi(c)\mathfrak g$ for every $c\in\mathbb F_q^\times$, and $\mathfrak g^2=\chi(-1)q$. Moreover, for every positive integer $\rho$, the sum $\sum_{t\in\mathbb F_q^\times}\chi(t)^\rho$ equals $0$ when $\rho$ is odd and $q-1$ when $\rho$ is even.
\end{lemma}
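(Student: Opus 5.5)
We prove the four assertions in turn, using only that $\psi$ is a nontrivial additive character, that $\chi$ is multiplicative with $\chi(0)=0$, and that the squares have index two in $\mathbb F_q^\times$.

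\emph{Orthogonality.} If $u=0$, every term $\psi(0)=1$, so the average is $1$. If $u\neq0$, the map $t\mapsto tu$ is a bijection of $\mathbb F_q$, so $\sum_t\psi(tu)=\sum_s\psi(s)=:\Sigma$. Choose $a$ with $\psi(a)\neq1$, which exists because $\psi$ is nontrivial. Translation invariance gives $\psi(a)\Sigma=\Sigma$, hence $\Sigma=0$.

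\emph{The quadratic sum.} For $z\in\mathbb F_q$, the number of $y$ with $y^2=z$ is $1+\chi(z)$. This holds for $z=0$ trivially. For nonzero $z$ it holds because $q$ is odd, so $y\mapsto y^2$ is two-to-one onto the nonzero squares. Given $c\neq0$, substitute $w=cz$ to obtain
\[
\sum_y\psi(cy^2)=\sum_z(1+\chi(z))\psi(cz)=\sum_w\psi(w)+\chi(c)^{-1}\sum_w\chi(w)\psi(w)=\chi(c)\mathfrak g.
\]
Here the first sum vanishes by orthogonality, and $\chi(c)^{-1}=\chi(c)$ since $\chi(c)=\pm1$.

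\emph{The value of $\mathfrak g^2$.} I expect this to be the main step. The plan is to compute $\sum_{x,y}\psi(x^2-y^2)$ in two ways.

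By the quadratic sum with $c=1$ and $c=-1$, it equals $\chi(1)\mathfrak g\cdot\chi(-1)\mathfrak g=\chi(-1)\mathfrak g^2$.

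Alternatively, since $q$ is odd, the substitution $u=x-y$, $v=x+y$ is a bijection of $\mathbb F_q^2$. It turns the sum into $\sum_{u,v}\psi(uv)$. By orthogonality, this equals $q$ times the number of $u$ with $u=0$, that is, $q$.

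Hence $\chi(-1)\mathfrak g^2=q$, and so $\mathfrak g^2=\chi(-1)q$, using $\chi(-1)^2=1$.

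\emph{Powers of $\chi$.} For even $\rho$, each term $\chi(t)^\rho=1$, giving $q-1$. For odd $\rho$, $\chi(t)^\rho=\chi(t)$. The quadratic character takes the value $1$ on exactly $(q-1)/2$ elements and $-1$ on the remaining $(q-1)/2$, since the squares form a subgroup of index two. The sum is therefore $0$.
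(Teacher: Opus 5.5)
Your proof is correct and self-contained; the paper omits this proof entirely and defers to Lidl and Niederreiter. Your orthogonality argument, your derivation of $\sum_y\psi(cy^2)=\chi(c)\mathfrak g$ by counting square roots as $1+\chi(z)$, and your evaluation of $\sum_{t\in\mathbb F_q^\times}\chi(t)^\rho$ are the standard ones.

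The one real departure from the textbook treatment is the value of $\mathfrak g^2$. The usual route has two steps. First it proves $|\mathfrak g|^2=q$, by expanding $\mathfrak g\overline{\mathfrak g}$ as a double sum over $\mathbb F_q^\times$ and substituting multiplicatively. Then it combines this with the conjugation symmetry $\overline{\mathfrak g}=\chi(-1)\mathfrak g$, which comes from $\overline{\psi(z)}=\psi(-z)$.

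You instead evaluate $\sum_{x,y}\psi(x^2-y^2)$ in two ways. One way uses your quadratic-sum identity with $c=\pm1$. The other uses the substitution $u=x-y$, $v=x+y$, which turns the sum into the hyperbolic sum $\sum_{u,v}\psi(uv)$. This gives $\chi(-1)\mathfrak g^2=q$ using only additive orthogonality, with no complex conjugation or modulus computation.

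The textbook argument is more general: it gives $\bigl|\sum_z\theta(z)\psi(z)\bigr|=\sqrt q$ for every nontrivial multiplicative character $\theta$. Yours is shorter and specific to the quadratic character. It uses the oddness of $q$ exactly where it is needed: in the square-root count and in the invertibility of the change of variables.
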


These identities are standard consequences of character orthogonality and the evaluation of the quadratic Gauss sum, as developed in \cite{LidlNiederreiter1997}, so we omit the proof.

We next give the zero-count estimate for homogeneous quadratic forms that will be used below. Let $V$ be an $s$-dimensional vector space over $\mathbb F_q$. By a homogeneous quadratic form on $V$, we mean a function $f\colon V\to\mathbb F_q$ for which there is a basis $e_1,\ldots,e_s$ of $V$ such that, if the coordinates of $v\in V$ are $y_1,\ldots,y_s$, then $f(v)=\sum_{i=1}^s a_i y_i^2+\sum_{1\leq i<j\leq s}b_{ij}y_iy_j$ for some coefficients $a_i,b_{ij}\in\mathbb F_q$. These coefficients are uniquely determined by $f$ and the chosen basis, since $a_i=f(e_i)$ and $b_{ij}=f(e_i+e_j)-f(e_i)-f(e_j)$. Since $2$ is invertible in $\mathbb F_q$, there is a unique symmetric matrix $A$ satisfying $f(v)=y^{\mathsf T}Ay$ in these coordinates, with $A_{ii}=a_i$ and $A_{ij}=A_{ji}=\frac{b_{ij}}{2}$. We define the rank of $f$ to be $\operatorname{rank}(A)$. Under a change of basis, $A$ is replaced by $P^{\mathsf T}AP$ for some invertible matrix $P$, so the rank does not depend on the chosen basis.

The corresponding solution counts when the rank equals the number of variables are given in \cite{LidlNiederreiter1997}. The lemma below provides the form needed here, where the rank may be smaller than $\dim V$ and the zero count is expressed in terms of both $\dim V$ and the rank. 

\begin{lemma}\label{quadraticlemma}
Let $V$ be an $s$-dimensional vector space over $\mathbb F_q$, where $q$ is odd, and let $f\colon V\to\mathbb F_q$ be a nonzero homogeneous quadratic form of rank $\rho$. There is a sign $\eps\in\{-1,1\}$ in the even-rank case such that
\begin{equation}
\begin{aligned}
\bigl|\{v\in V\mid f(v)=0\}\bigr|
&=
\begin{cases}
q^{s-1},&\rho\text{ is odd},\\
q^{s-1}+\eps(q-1)q^{s-\rho/2-1},&\rho\text{ is even},
\end{cases}\\
\bigl|\{v\in V\mid f(v)=0\}\bigr|
&\leq(2q-1)q^{s-2}.
\end{aligned}
\label{quadraticcount}
\end{equation}
\end{lemma}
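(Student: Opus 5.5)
The plan is to diagonalize and then count zeros with the character identities of Lemma~\ref{lem:character-identities}. Because $q$ is odd, the symmetric matrix $A$ of $f$ is congruent to a diagonal matrix. Concretely, there is a basis of $V$ in which $f(v)=\sum_{i=1}^{\rho}c_iy_i^2$ with every $c_i\in\mathbb F_q^\times$ and no dependence on $y_{\rho+1},\ldots,y_s$. This follows from the standard Gram--Schmidt-type argument: pick $e$ with $f(e)\neq0$, which exists since $f\neq0$ and $2$ is invertible, split off $e^\perp$ with respect to the associated bilinear form, and induct. The rank is unchanged under this change of basis. The last $s-\rho$ coordinates are free, so it suffices to count the zeros $N_\rho$ of the nondegenerate diagonal form in $\rho$ variables. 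The total count is then $q^{s-\rho}N_\rho$.

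Next I would compute $N_\rho$ by orthogonality. The first identity of Lemma~\ref{lem:character-identities} gives
\[
N_\rho=\frac1q\sum_{t\in\mathbb F_q}\prod_{i=1}^\rho\sum_{y\in\mathbb F_q}\psi(tc_iy^2).
\]
The term $t=0$ contributes $q^{\rho-1}$. For $t\neq0$, the Gauss sum identity gives the product $\chi(t)^\rho\chi(c_1\cdots c_\rho)\mathfrak g^\rho$. Summing over $t\in\mathbb F_q^\times$ and using the last identity of the lemma, the contribution vanishes when $\rho$ is odd, giving $N_\rho=q^{\rho-1}$.

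When $\rho$ is even, the same sum gives $\frac{q-1}{q}\chi(c_1\cdots c_\rho)\mathfrak g^\rho$. Here $\mathfrak g^\rho=(\chi(-1)q)^{\rho/2}$, so the contribution equals $\eps(q-1)q^{\rho/2-1}$ with $\eps=\chi\bigl((-1)^{\rho/2}c_1\cdots c_\rho\bigr)\in\{\pm1\}$. Multiplying by $q^{s-\rho}$ gives the exact formula in \eqref{quadraticcount}.

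For the uniform bound, the odd case gives $q^{s-1}\le(2q-1)q^{s-2}$ trivially. In the even case $\rho\ge2$, so $q^{s-\rho/2-1}\le q^{s-2}$. Hence the count is at most $q^{s-1}+(q-1)q^{s-2}=(2q-1)q^{s-2}$, with equality when $\rho=2$ and $\eps=1$, i.e.\ a hyperbolic plane.

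The only step needing care is the diagonalization, which uses that $q$ is odd. I would also check that $\rho\ge1$, which holds because $f\ne0$. Everything else is routine bookkeeping with the stated Gauss sum identities.
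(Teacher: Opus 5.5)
Your proposal is correct and follows the paper's proof essentially step by step: you diagonalize using that $q$ is odd, then apply additive-character orthogonality and the quadratic Gauss sum identities. The $t\neq0$ contribution vanishes for odd $\rho$ and equals $\eps(q-1)q^{s-\rho/2-1}$ with $\eps=\chi\bigl((-1)^{\rho/2}c_1\cdots c_\rho\bigr)$ for even $\rho$, and $\rho\geq2$ then gives the uniform bound. The only cosmetic difference is that you factor out the $q^{s-\rho}$ from the free coordinates before forming the character sum, whereas the paper keeps it inside.
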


\begin{proof}
Since $q$ is odd, a change of basis diagonalizes the symmetric matrix representing $f$. We may therefore write $f(y)=a_1y_1^2+\cdots+a_\rho y_\rho^2$, where $a_1,\ldots,a_\rho\in\mathbb F_q^\times$. Let $\psi$, $\chi$, and $\mathfrak g$ be as in Lemma~\ref{lem:character-identities}. Character orthogonality gives the first equality below. The term with $t=0$ contributes $q^{s-1}$. For $t\in\mathbb F_q^\times$, the $s-\rho$ coordinates absent from $f$ contribute $q^{s-\rho}$, and the remaining sums factor. Therefore, by Lemma~\ref{lem:character-identities} we obtain that
\[
\begin{aligned}
\bigl|\{v\in V\mid f(v)=0\}\bigr|
&=\frac{1}{q}\sum_{t\in\mathbb F_q}\sum_{y\in\mathbb F_q^s}\psi(tf(y))\\
&=q^{s-1}+q^{s-\rho-1}\sum_{t\in\mathbb F_q^\times}\prod_{i=1}^{\rho}\left(\sum_{u\in\mathbb F_q}\psi(ta_i u^2)\right)\\
&=q^{s-1}+q^{s-\rho-1}\mathfrak g^\rho\left(\prod_{i=1}^{\rho}\chi(a_i)\right)\sum_{t\in\mathbb F_q^\times}\chi(t)^\rho.
\end{aligned}
\]
If $\rho$ is odd, the final sum is zero, so the number of zeros is $q^{s-1}$. If $\rho$ is even, the final sum is $q-1$, while $\mathfrak g^\rho=(\chi(-1)q)^{\rho/2}$. Hence the number of zeros is $q^{s-1}+\varepsilon(q-1)q^{s-\rho/2-1}$, where $\varepsilon=\chi\left((-1)^{\rho/2}a_1\cdots a_\rho\right)\in\{-1,1\}$.

It remains to establish the upper bound. If $\rho$ is odd, then $q^{s-1}\leq(2q-1)q^{s-2}$. If $\rho$ is even, then $f\neq0$ implies $\rho\geq2$, and consequently
\[
\bigl|\{v\in V\mid f(v)=0\}\bigr|\leq q^{s-1}+(q-1)q^{s-2}=(2q-1)q^{s-2}.
\]
\end{proof}

\section{Coordinate collisions and dimension reduction}\label{collisionsection}
For $a\in R_n$, define the annihilator of $a$ by $\operatorname{Ann}_{R_n}(a)=\{r\in R_n:ar=0\}$, and define the principal ideal generated by $a$ by $aR_n=\{ar:r\in R_n\}$. We first determine the annihilator of $x_i-x_j$ and the quotient obtained by imposing the relation $x_i+x_j=0$. In this quotient, the relation $x_j=-x_i$ allows one generator to be eliminated. The following lemma shows that the resulting quotient is a square-zero algebra on $n-1$ generators and thereby makes the reduction from $R_n$ to $R_{n-1}$ precise.

\begin{lemma}\label{annihilatorlemma}
For distinct $i,j\in[n]$, the following identities hold.
\begin{equation}
\operatorname{Ann}_{R_n}(x_i-x_j)=(x_i+x_j)R_n
\qquad\text{and}\qquad
R_n/(x_i+x_j)R_n\cong R_{n-1}.
\label{annihilatoridentity}
\end{equation}
Moreover, the isomorphism in~(\ref{annihilatoridentity}) may be chosen so that, for every $d\geq0$, it maps the residue classes represented by elements of $(R_n)_d$ onto $(R_{n-1})_d$.
\end{lemma}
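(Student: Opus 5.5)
The plan is to reduce to a tensor decomposition. Without loss of generality take $i=1$, $j=2$. Write $R_n\cong R_2\otimes_{\mathbb F_q} R'$, where $R_2=\mathbb F_q[x_1,x_2]/(x_1^2,x_2^2)$ and $R'$ is the square-zero algebra on $x_3,\ldots,x_n$. Since $R'$ is free over $\mathbb F_q$ with basis $\{x_T: T\subseteq\{3,\ldots,n\}\}$, every element of $R_n$ can be written uniquely as $\sum_T a_T x_T$ with $a_T\in R_2$. Multiplication by $x_1-x_2$ acts coefficientwise on the $a_T$. Consequently both the annihilator and the principal ideal decompose as direct sums over $T$:
\[
\operatorname{Ann}_{R_n}(x_1-x_2)=\operatorname{Ann}_{R_2}(x_1-x_2)\otimes R', \qquad (x_1+x_2)R_n=(x_1+x_2)R_2\otimes R'.
\]
This reduces the first identity to a computation in the $4$-dimensional algebra $R_2$.

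In $R_2$ take the basis $1,x_1,x_2,x_1x_2$ and let $a=\alpha+\beta x_1+\gamma x_2+\delta x_1x_2$. Then
\[
(x_1-x_2)a=\alpha x_1-\alpha x_2+(\gamma-\beta)x_1x_2,
\]
which vanishes iff $\alpha=0$ and $\beta=\gamma$. So the annihilator is spanned by $x_1+x_2$ and $x_1x_2$. On the other side,
\[
(x_1+x_2)R_2=\operatorname{span}\{x_1+x_2,\;x_1x_2\},
\]
because $(x_1+x_2)x_1=x_1x_2$ and $(x_1+x_2)x_2=x_1x_2$. The two subspaces are equal, and tensoring with $R'$ gives the first identity in \eqref{annihilatoridentity}. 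Only the inclusion $\supseteq$ is actually trivial, since $(x_1-x_2)(x_1+x_2)=x_1^2-x_2^2=0$. The reverse inclusion is where the basis decomposition is needed, and this is the only step requiring care.

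For the quotient, define a surjective algebra homomorphism
\[
\Phi\colon \mathbb F_q[x_1,\ldots,x_n]\to R_{n-1},
\]
where $R_{n-1}$ has generators $y_1,y_3,\ldots,y_n$, by $x_1\mapsto y_1$, $x_2\mapsto -y_1$, and $x_k\mapsto y_k$ for $k\ge3$. Since $\Phi(x_k^2)=y_k^2=0$ and $\Phi(x_2^2)=y_1^2=0$, the map $\Phi$ factors through $R_n$. It also kills $x_1+x_2$, so it induces a surjection
\[
\bar\Phi\colon R_n/(x_1+x_2)R_n\to R_{n-1}.
\]
To show $\bar\Phi$ is injective, compare dimensions. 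In $R_2$ the quotient by $\operatorname{span}\{x_1+x_2,x_1x_2\}$ is $2$-dimensional, so $\dim R_n/(x_1+x_2)R_n=2\cdot 2^{n-2}=2^{n-1}=\dim R_{n-1}$. A surjection between spaces of equal dimension is an isomorphism.

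Finally, $\Phi$ sends each monomial of degree $d$ either to $0$ or to $\pm$ a monomial of degree $d$. Hence $\bar\Phi$ maps the classes of $(R_n)_d$ into $(R_{n-1})_d$. The image is all of $(R_{n-1})_d$, since each $y_S$ is the image of the corresponding $x_S$, with $y_1$ replaced by $x_1$. This gives the grading statement.
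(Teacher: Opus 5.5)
Your proof is correct and is essentially the paper's argument: the splitting $R_n\cong R_2\otimes R'$ is the paper's decomposition $R_n=\mathcal S\oplus x_i\mathcal S\oplus x_j\mathcal S\oplus x_ix_j\mathcal S$ with coefficients and basis swapped, your conditions $\alpha=0$, $\beta=\gamma$ are the paper's $A=0$, $C=B$, and your substitution $x_2\mapsto -y_1$ is the paper's map $\Psi$. The only difference is that you get injectivity of the induced map by a dimension count ($\dim R_n/(x_1+x_2)R_n=2^{n-1}$, using the description of the ideal from the first part), whereas the paper builds the map $y_k\mapsto\overline{x}_k$ in the opposite direction and shows it is a surjection with a left inverse; both work.
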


\begin{proof}
By definition, $\operatorname{Ann}_{R_n}(x_i-x_j)=\{F\in R_n:(x_i-x_j)F=0\}$. We therefore determine all $F\in R_n$ satisfying $(x_i-x_j)F=0$.
Let $\mathcal S$ be the $\mathbb F_q$-subspace spanned by the basis monomials $x_T$ with $T\subseteq[n]\setminus\{i,j\}$. It is also the subalgebra generated by the variables $x_k$ with $k\notin\{i,j\}$. Partitioning the basis monomials of $R_n$ according to whether they contain $x_i$ and $x_j$ gives $R_n=\mathcal S\oplus x_i\mathcal S\oplus x_j\mathcal S\oplus x_ix_j\mathcal S$. Hence every $F\in R_n$ has a unique expression $F=A+x_iB+x_jC+x_ix_jD$, where $A,B,C,D\in\mathcal S$.
Using $x_i^2=x_j^2=0$, we obtain $(x_i-x_j)F=x_iA-x_jA+x_ix_j(C-B)$. The three terms on the right belong respectively to the distinct summands $x_i\mathcal S$, $x_j\mathcal S$, and $x_ix_j\mathcal S$. Moreover, multiplication by $x_i$, $x_j$, or $x_ix_j$ maps distinct basis monomials of $\mathcal S$ to distinct basis monomials of $R_n$. It follows that $(x_i-x_j)F=0$ if and only if $A=0$ and $C=B$. In that case, $F=x_iB+x_jB+x_ix_jD=(x_i+x_j)(B+x_iD)$, so $F\in(x_i+x_j)R_n$. Thus $\operatorname{Ann}_{R_n}(x_i-x_j)\subseteq(x_i+x_j)R_n$.
Conversely, if $F\in(x_i+x_j)R_n$, then $F=(x_i+x_j)G$ for some $G\in R_n$. Hence $(x_i-x_j)F=(x_i-x_j)(x_i+x_j)G=(x_i^2-x_j^2)G=0$, so $F\in\operatorname{Ann}_{R_n}(x_i-x_j)$. This proves the first identity in \eqref{annihilatoridentity}.

For the second identity, set $I=(x_i+x_j)R_n$, write $\overline{x}_k=x_k+I\in R_n/I$ for each $k\in [n]$, and let $T=\mathbb F_q[y_k:k\in[n]\setminus\{j\}]/(y_k^2:k\in[n]\setminus\{j\})$. After relabeling its generators, $T$ is $R_{n-1}$. We first construct a surjective homomorphism from $T$ to $R_n/I$ and then prove that it is injective by constructing a left inverse.
Since $\overline{x}_k^{\,2}=0$ for every $k\neq j$, the assignments $y_k\mapsto\overline{x}_k$ define an $\mathbb F_q$-algebra homomorphism $\Phi\colon T\to R_n/I$. Moreover, $\overline{x}_j=-\overline{x}_i$, so all the generators $\overline{x}_1,\ldots,\overline{x}_n$ of $R_n/I$ lie in the image of $\Phi$. Hence $\Phi$ is surjective.
To prove that $\Phi$ is injective, define $\Psi\colon R_n\to T$ by $\Psi(x_k)=y_k$ for $k\neq j$ and $\Psi(x_j)=-y_i$. This homomorphism is well defined because $y_k^2=0$ for every $k\neq j$ and $(-y_i)^2=0$. Since $\Psi(x_i+x_j)=y_i-y_i=0$, we have $I\subseteq\ker\Psi$, and hence $\Psi$ induces an $\mathbb F_q$-algebra homomorphism $\overline{\Psi}\colon R_n/I\to T$. For every $k\neq j$, $(\overline{\Psi}\circ\Phi)(y_k)=y_k$, and therefore $\overline{\Psi}\circ\Phi$ is the identity map on $T$. If $\Phi(F)=0$ for some $F\in T$, applying $\overline{\Psi}$ gives $F=(\overline{\Psi}\circ\Phi)(F)=0$. Thus $\Phi$ is injective and hence is an isomorphism. Consequently, $R_n/I\cong T\cong R_{n-1}$.
Finally, $\overline{\Psi}$ replaces each occurrence of $x_j$ by $-y_i$ and each occurrence of $x_k$ with $k\neq j$ by $y_k$. It therefore maps, for every $d\geq0$, the residue classes represented by elements of $(R_n)_d$ onto the degree-$d$ part of $T$, which is identified with $(R_{n-1})_d$.
\end{proof}

The next lemma is the core recurrence. The two indices have different roles. The first records the number of variables, while the second records the number of omitted linear factors.

\begin{lemma}\label{recurrenceproposition}
For $n\geq2$ and $1\leq k\leq n-1$, one has
\begin{equation}
\gamma_{n,k}
\leq
\frac{2q-1}{q^2}\left(\frac{1}{n}+\frac{n-1}{n}\gamma_{n-1,k}\right)
+
\frac{(q-1)^2}{q^2}\gamma_{n,k+1}.
\label{recurrence}
\end{equation}
\end{lemma}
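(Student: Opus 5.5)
The plan is to peel off the last linear factor. Write $Q=L_1\cdots L_{n-k-1}$, so that $L_1\cdots L_{n-k}=QL_{n-k}$, and split according to whether $Q=0$. The event $Q=0$ has probability exactly $\gamma_{n,k+1}$. Since $\frac{2q-1}{q^2}+\frac{(q-1)^2}{q^2}=1$, the inequality \eqref{recurrence} is equivalent to
\[
\Pr[Q\neq0,\ L_{n-k}Q=0]\le\frac{2q-1}{q^2}\left(\frac1n+\frac{n-1}{n}\gamma_{n-1,k}-\gamma_{n,k+1}\right).
\]
So the task is to bound the conditional probability $\Pr[LQ=0\mid Q]$ for a fixed nonzero $Q\in(R_n)_{n-k-1}$ and then average over $Q$.

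\emph{The quadratic form.} Fix $Q\neq0$ and let $K=\{L\in(R_n)_1: LQ=0\}$. Put $V=(R_n)_1/K$, of dimension $s$, so that $\Pr[LQ=0]=q^{-s}$. The map $L\mapsto L^2Q$ is well defined on $V$: if $uQ=0$, then $(L+u)^2Q=L^2Q+2LuQ+u^2Q=L^2Q$. I will show this vector-valued quadratic map is not identically zero. The idea is to use that $Q$ has some nonzero coefficient $c_S$, pick $i,j\notin S$, and note that $(x_i+x_j)^2Q$ has coefficient $2c_S\neq0$ at $x_{S\cup\{i,j\}}$, where $q$ odd is used. This is possible because $|S|=n-k-1\le n-2$. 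Composing with a suitable coordinate functional then gives a nonzero scalar quadratic form $f$ on $V$ with $s\ge2$. Since $x_i^2=0$, we have $f(\bar x_i)=0$ for every coordinate class $\bar x_i$. By Lemma~\ref{quadraticlemma}, all $n$ classes $\bar x_1,\ldots,\bar x_n$ lie in a set $Z$ with $|Z|\le(2q-1)q^{s-2}$.

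\emph{Collisions and reduction.} If the $\bar x_i$ were pairwise distinct, we would get $q^{-s}\le\frac{2q-1}{q^2}\cdot\frac1n$. In general, let $c_i$ be the indicator that $\bar x_i=\bar x_j$ for some other index $j$; if zero classes cause trouble, I would also allow $\bar x_i=\pm\bar x_j$ or $\bar x_i=0$. The number $N$ of distinct classes then satisfies $n\le N+\sum_i c_i$. This gives
\[
q^{-s}\le\frac{1}{n}\left(Nq^{-s}+\sum_i c_iq^{-s}\right)\le\frac{2q-1}{q^2}\cdot\frac1n+\frac1n\sum_i c_iq^{-s}.
\]
A collision $\bar x_i=\bar x_j$ means $(x_i-x_j)Q=0$. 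By Lemma~\ref{annihilatorlemma}, this places $Q$ in $(x_i+x_j)R_n$, so the image of $Q$ in $R_n/(x_i+x_j)R_n\cong R_{n-1}$ vanishes. The map $\overline\Psi$ sends a uniform element of $(R_n)_1$ to a uniform element of $(R_{n-1})_1$, since it is a surjective linear map. Hence this image is a product of $n-k-1=(n-1)-k$ independent uniform linear forms in $R_{n-1}$, and the event has probability $\gamma_{n-1,k}$.

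\emph{Main obstacle.} The delicate step is the bookkeeping that turns $\frac1n\sum_i\mathbb E[c_iq^{-s}\mathbf 1_{Q\neq0}]$ into exactly $\frac{2q-1}{q^2}\left(\frac{n-1}{n}\gamma_{n-1,k}-\gamma_{n,k+1}\right)$ without a union bound over the partner $j$. My first attempt would be to count $N$ more cleverly, by charging only collisions of a point with an earlier index, so that at most $n-1$ indices can be charged. This accounts for the factor $\frac{n-1}{n}$. I would then use exchangeability of the columns to reduce each charged collision to the single event that the image of $Q$ in $R_{n-1}$ vanishes. The factor $\frac{2q-1}{q^2}$ on the collision term would come from again using $q^{-s}\le\frac{2q-1}{q^2}$, valid since $s\ge2$. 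The subtraction of $\gamma_{n,k+1}$ would come from the observation that the reduced product vanishes whenever $Q=0$, and the case $Q=0$ is already counted separately.
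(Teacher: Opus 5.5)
Your reduction to bounding $\Pr[Q\neq0,\ L_{n-k}Q=0]$, the nonzero quadratic form $f$ with $f(\bar x_i)=0$, the bound $|Z|\le(2q-1)q^{s-2}$, and the fact that each fixed off-diagonal collision has probability $\gamma_{n-1,k}$ all match the paper. But the step you flag as the main obstacle is a genuine gap, and the proposed fix fails. After charging each class to its earliest index and using $q^{-s}\le\frac{2q-1}{q^2}$, you need $\sum_{i\ge2}\Pr[c'_i=1,\,Q\neq0]\le(n-1)\Pr[\bar x_1=\bar x_2,\,Q\neq0]$, where $c'_i$ indicates that $\bar x_i=\bar x_j$ for some $j<i$. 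However, $\{c'_i=1\}$ is a union over $i-1$ partners, and exchangeability only equalizes the individual pair probabilities; it does not bound the union by one pair.

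This already fails for $k=n-2$ with $n\ge3$. There $Q=L_1$, and for $Q\neq0$ one has $\bar x_i=\bar x_j$ iff $L_1\in\mathbb F_q^\times(x_i+x_j)$. These pair events are disjoint, so $\Pr[c'_i=1,\,Q\neq0]=(i-1)(q-1)q^{-n}$. Meanwhile $\Pr[\bar x_1=\bar x_2,\,Q\neq0]=(q-1)q^{-n}=\gamma_{n-1,k}-\gamma_{n,k+1}$. Summing over $i$ gives $\binom n2$ in place of $n-1$, i.e.\ a coefficient of order $n$ in front of $\gamma_{n-1,k}$, which is useless for the recurrence and its iteration. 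The real loss is your pointwise bound $q^{-s}\le\frac{2q-1}{q^2}\cdot\frac{1+(n-N)}{n}$. If, say, all classes have size $2$, it gives roughly $\frac{2q-1}{2q^2}$, whereas the truth is at most $\frac{2q-1}{q^2}\cdot\frac{2}{n}$.

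The missing idea is to keep the factor $1/N$ and compare it with the \emph{ordered collision density} $D=n^{-2}\,|\{(i,j)\in[n]^2:\bar x_i=\bar x_j\}|=n^{-2}\sum_\nu m_\nu^2$, where the $m_\nu$ are the class sizes. Cauchy--Schwarz gives $n^2=(\sum_\nu m_\nu)^2\le N\sum_\nu m_\nu^2$, i.e.\ $1/N\le D$. Hence $q^{-s}\le\frac{2q-1}{q^2}D$ whenever $Q\neq0$. Since $D$ is a normalized sum of pair indicators, linearity of expectation gives $\mathbb E[D]=\frac1n+\frac{n-1}{n}\gamma_{n-1,k}$ exactly, with no union bound; the $n$ diagonal pairs supply the $\frac1n$. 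On $\{Q=0\}$ one has $s=0$ and all classes coincide, so $D=1$ there. This yields the pointwise inequality $q^{-s}\le\frac{2q-1}{q^2}D+\frac{(q-1)^2}{q^2}\mathbf 1_{\{Q=0\}}$, whose expectation is the stated recurrence. Equivalently, $\mathbb E[D\,\mathbf 1_{\{Q\neq0\}}]=\frac1n+\frac{n-1}{n}\gamma_{n-1,k}-\gamma_{n,k+1}$, which is exactly the target you reduced to.
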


\begin{proof}
Let $G=L_1\cdots L_{n-k-1}$, where the product is understood to be $1$ when $k=n-1$. For each realization of $G$, define the linear map $T_G\colon(R_n)_1\to(R_n)_{n-k}$ by $T_G(u)=uG$, and write $s(G)=\operatorname{rank}T_G$. Since $(R_n)_1$ has dimension $n$, by rank--nullity we have $|\ker T_G|=q^{n-s(G)}$. Conditional on $G$, the final form $L_{n-k}$ is still uniform on $(R_n)_1$ and is independent of $G$. Consequently, for each possible value $g$ of $G$,
$
\Pr[L_{n-k}G=0\mid G=g]=\frac{|\ker T_g|}{q^n}=q^{-s(g)}.
$
Averaging this conditional probability over the distribution of $G$ and applying the law of total probability, we obtain
\[
\gamma_{n,k}
=
\Pr[L_{n-k}G=0]
=
\mathbb E\!\left[\Pr[L_{n-k}G=0\mid G]\right]
=
\mathbb E[q^{-s(G)}].
\] Set $U=(R_n)_1/\ker T_G$, let $v_i=x_i+\ker T_G\in U$, and define the ordered collision density by $D(G)=n^{-2}|\{(i,j)\in[n]^2:v_i=v_j\}|$. Thus $\dim U=s(G)$. Moreover, since $v_i=x_i+\ker T_G$, we have
\[
D(G)
=
\frac{1}{n^2}
\left|\{(i,j)\in[n]^2:v_i=v_j\}\right|
=
\frac{1}{n^2}
\left|\{(i,j)\in[n]^2:(x_i-x_j)G=0\}\right|.
\]

Suppose that $G\neq0$. Since $G$ is homogeneous of degree $n-k-1$, there is some $S\subseteq[n]$ with $|S|=n-k-1$ such that the coefficient $c_S$ of $x_S$ in $G$ is nonzero. The elements $x_jG$ with $j\notin S$ are linearly independent. Indeed, suppose that $\sum_{j\notin S}a_jx_jG=0$ and fix $r\notin S$. The coefficient of $x_{S\cup\{r\}}$ in $x_rG$ is $c_S$. If $j\notin S$ and $j\neq r$, then every nonzero monomial in $x_jG$ contains $x_j$, whereas $x_{S\cup\{r\}}$ does not contain $x_j$; hence $x_jG$ does not contribute to this coefficient. Comparing coefficients gives $a_rc_S=0$, so $a_r=0$. This holds for every $r\notin S$, proving the asserted linear independence. Since there are $|[n]\setminus S|=k+1$ such elements in the image of $T_G$, we obtain $s(G)\geq k+1\geq2$.

We next show that the elements $v_1,\ldots,v_n$ satisfy a common nonzero quadratic equation on $U$; the zero-count estimate in Lemma~\ref{quadraticlemma} will then bound the number of distinct values among these elements and thereby give a lower bound for $D(G)$.

\begin{claim}\label{quadraticclaim}
If $G\neq0$, there exists a nonzero homogeneous quadratic form $f\colon U\to\mathbb F_q$ such that $f(v_i)=0$ for every $i\in[n]$.
\end{claim}

\begin{proof}[Proof of Claim~\ref{quadraticclaim}]
To prove the claim, keep the set $S$ chosen above and choose distinct indices $a,b\notin S$, which is possible because $|[n]\setminus S|=k+1\geq2$. Every element $H\in(R_n)_{n-k+1}$ has a unique expansion
$
H=\sum_{\substack{A\subseteq[n]\\|A|=n-k+1}}c_Ax_A.
$
Define $\lambda\colon(R_n)_{n-k+1}\to\mathbb F_q$ by
$
\lambda(H)=c_{S\cup\{a,b\}};
$
thus $\lambda(H)$ is the coefficient of $x_{S\cup\{a,b\}}$ in $H$. Since $u^2G\in(R_n)_{n-k+1}$ for every $u\in(R_n)_1$, consider the formula
$
f(u+\ker T_G)=\lambda(u^2G).
$
We verify that this formula depends only on the coset $u+\ker T_G$. If $u+w$ is another representative of this coset, then $w\in\ker T_G$, so $wG=0$. Consequently,
\[
(u+w)^2G-u^2G
=
2uwG+w^2G
=
2u(wG)+w(wG)
=
0.
\]
It follows that $\lambda((u+w)^2G)=\lambda(u^2G)$, and hence the formula defines a function $f\colon U\to\mathbb F_q$. To see directly that $f$ is a homogeneous quadratic form, choose a basis $e_1,\ldots,e_{s(G)}$ of $U$ and representatives $\widetilde e_1,\ldots,\widetilde e_{s(G)}$ in $(R_n)_1$. Then
\[
f\left(\sum_{\nu=1}^{s(G)}t_\nu e_\nu\right)
=
\sum_{\mu,\nu=1}^{s(G)}
t_\mu t_\nu\lambda(\widetilde e_\mu\widetilde e_\nu G),
\]
which is a homogeneous polynomial of degree two in the coordinates $t_1,\ldots,t_{s(G)}$. Moreover, $f(v_\ell)=\lambda(x_\ell^2G)=0$ for every $\ell\in[n]$. On the other hand, the coefficient of $x_{S\cup\{a,b\}}$ in $x_ax_bG$ is exactly $c_S$, because no monomial of $G$ other than $x_S$ can produce $x_{S\cup\{a,b\}}$ after multiplication by $x_ax_b$. Since $q$ is odd, we have that 
\[
f(v_a+v_b)
=
\lambda((x_a+x_b)^2G)
=
2\lambda(x_ax_bG)
=
2c_S
\neq0.
\]
Thus $f$ is nonzero, proving the claim.
\end{proof}

Let the distinct values among $v_1,\ldots,v_n$ be $w_1,\ldots,w_h$, and let $m_\nu=|\{i\in[n]:v_i=w_\nu\}|$. Then $\sum_{\nu=1}^hm_\nu=n$. For each $\nu$, there are exactly $m_\nu^2$ ordered pairs $(i,j)$ such that $v_i=v_j=w_\nu$. Hence we obtain that
$
D(G)=\frac1{n^2}\sum_{\nu=1}^hm_\nu^2.
$
By the Cauchy--Schwarz inequality,
$
n^2=\left(\sum_{\nu=1}^hm_\nu\right)^2\leq h\sum_{\nu=1}^hm_\nu^2,
$
and therefore $D(G)\geq1/h$. Moreover, each $w_\nu$ satisfies $f(w_\nu)=0$. Since $f$ is a nonzero quadratic form on the $s(G)$-dimensional space $U$, by Lemma~\ref{quadraticlemma} we have
$
h\leq(2q-1)q^{s(G)-2}.
$
Consequently,
\[
D(G)\geq\frac1h\geq\frac1{(2q-1)q^{s(G)-2}}=\frac{q^2}{2q-1}q^{-s(G)},
\]
which implies that $q^{-s(G)}\leq(2q-1)D(G)/q^2$ whenever $G\neq0$. If $G=0$, then $T_G=0$, so $s(G)=0$, $U=\{0\}$, and all the classes $v_i$ coincide; hence $D(G)=1$. Since $1-(2q-1)/q^2=(q-1)^2/q^2$, the two cases combine into the pointwise inequality
\begin{equation}
q^{-s(G)}
\leq
\frac{2q-1}{q^2}D(G)
+
\frac{(q-1)^2}{q^2}\1_{\{G=0\}}.
\label{pointwisebound}
\end{equation}
Here $(2q-1)/q^2$ comes from the quadratic zero-count estimate, whereas $(q-1)^2/q^2$ is the complementary amount needed to make the inequality valid in the exceptional case $G=0$.

It remains to compute $\mathbb E[D(G)]$. By definition, $\mathbb E[D(G)]=n^{-2}\sum_{i,j=1}^n\Pr[v_i=v_j]$. The equality $v_i=v_i$ holds identically for each of the $n$ diagonal pairs. Now fix distinct $i,j\in[n]$. By the definition of $U$ and Lemma~\ref{annihilatorlemma}, one has $v_i=v_j$ if and only if $(x_i-x_j)G=0$, which is equivalent to $G\in(x_i+x_j)R_n$, and hence to the image of $G$ being zero under the quotient map $\pi_{ij}\colon R_n\to R_n/(x_i+x_j)R_n\cong R_{n-1}$. Write the generators of this copy of $R_{n-1}$ as $y_\ell$, with $\ell\in[n]\setminus\{j\}$. If $L_r=\sum_{\ell=1}^na_{r\ell}x_\ell$, then
\[
\pi_{ij}(L_r)
=
(a_{ri}-a_{rj})y_i
+
\sum_{\ell\notin\{i,j\}}a_{r\ell}y_\ell.
\]
The corresponding coefficient map from $\mathbb F_q^n$ to $\mathbb F_q^{n-1}$ is surjective, and every target vector has exactly $q$ preimages: the coefficient $a_{rj}$ may be chosen arbitrarily, after which $a_{ri}$ and all the remaining coefficients are determined. It therefore sends a uniform linear form in $(R_n)_1$ to a uniform linear form in $(R_{n-1})_1$. Applying the same deterministic map separately to the independent forms $L_1,\ldots,L_{n-k-1}$ also preserves their independence. Since $n-k-1=(n-1)-k$, their images are precisely the number of independent uniform linear forms used in the definition of $\gamma_{n-1,k}$. Thus $\Pr[v_i=v_j]=\Pr[\pi_{ij}(G)=0]=\gamma_{n-1,k}$ for every $i\neq j$. There are $n$ diagonal ordered pairs and $n(n-1)$ off-diagonal ordered pairs, so
\[
\mathbb E[D(G)]
=
\frac{n+n(n-1)\gamma_{n-1,k}}{n^2}
=
\frac1n+\frac{n-1}{n}\gamma_{n-1,k}.
\]

Finally, $G$ is the product of $n-k-1=n-(k+1)$ independent uniform linear forms in $R_n$, so $\Pr[G=0]=\gamma_{n,k+1}$. Taking expectations in \eqref{pointwisebound} and using the exact identity $\gamma_{n,k}=\mathbb E[q^{-s(G)}]$, we obtain that
\[
\begin{aligned}
\gamma_{n,k}
&\leq
\frac{2q-1}{q^2}\mathbb E[D(G)]
+
\frac{(q-1)^2}{q^2}\Pr[G=0]\\
&=
\frac{2q-1}{q^2}
\left(
\frac1n+\frac{n-1}{n}\gamma_{n-1,k}
\right)
+
\frac{(q-1)^2}{q^2}\gamma_{n,k+1},
\end{aligned}
\]
which is \eqref{recurrence}. The argument also includes the boundary case $k=n-1$, in which case $G=1$, while $\gamma_{n,n}=\gamma_{n-1,n-1}=0$ by the empty-product convention.
\end{proof}

\section{Iteration and completion of the proof}\label{iterationsection}
We first iterate Lemma~\ref{recurrenceproposition}. The proof keeps track of the full range of admissible indices, which will be needed when the iteration reaches the diagonal $k=n$.

\begin{proposition}[Iterated recurrence]\label{iterationproposition}
For $1\leq k\leq n$ and every integer $t$ with $0\leq t\leq n-k$, one has
\begin{equation}
\gamma_{n,k}
\leq
\frac{(2q-1)t}{q^2n}
+
\frac{q^{-k}}{q-1}
\left(1-\left(1-\frac{1}{q}\right)^3\right)^t.
\label{iteratedbound}
\end{equation}
\end{proposition}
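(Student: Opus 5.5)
Set $\beta=1-(1-1/q)^3$, $a=(2q-1)/q^2$ and $b=(q-1)^2/q^2$, so that $a+b=1$. We argue by induction on $t$, proving \eqref{iteratedbound} simultaneously for all pairs $(n,k)$ with $1\le k\le n$ and $0\le t\le n-k$.

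\textbf{Base case $t=0$.} Here \eqref{iteratedbound} reads $\gamma_{n,k}\le q^{-k}/(q-1)$, which is exactly Lemma~\ref{taillemma}.

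\textbf{Inductive step.} Let $t\ge1$. Then $t\le n-k$ forces $k\le n-1$ and $n\ge2$, so Lemma~\ref{recurrenceproposition} applies. We then check that the induction hypothesis with $t-1$ covers both terms on its right-hand side.
\begin{itemize}
\item For $\gamma_{n-1,k}$: we need $t-1\le (n-1)-k$, which holds.
\item For $\gamma_{n,k+1}$: we need $t-1\le n-(k+1)$, which also holds.
\end{itemize}
Substituting, we bound $\frac{n-1}{n}\gamma_{n-1,k}$ by
\[
\frac{n-1}{n}\cdot\frac{(2q-1)(t-1)}{q^2(n-1)}+\frac{n-1}{n}\cdot\frac{q^{-k}}{q-1}\beta^{t-1}
\le \frac{a(t-1)}{n}+\frac{q^{-k}}{q-1}\beta^{t-1}.
\]
Here we drop $(n-1)/n\le1$ in the second term. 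The first term simplifies exactly because $(n-1)/n$ cancels the $1/(n-1)$.

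\textbf{Collecting the linear terms.} The terms linear in $t/n$ are
\[
a\Bigl(\frac1n+\frac{a(t-1)}{n}\Bigr)+b\,\frac{a(t-1)}{n}=\frac an\bigl(1+(a+b)(t-1)\bigr)=\frac{at}{n},
\]
using $a+b=1$.

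\textbf{Collecting the tail terms.} Their total is
\[
\frac{q^{-k}}{q-1}\,\beta^{t-1}\bigl(a+b\,q^{-1}\bigr).
\]
It remains to check that $a+b/q\le\beta$. We have
\[
a+\frac bq=\frac{2q-1}{q^2}+\frac{(q-1)^2}{q^3}=\frac{3q^2-3q+1}{q^3}.
\]
On the other hand, $\beta=1-(q-1)^3/q^3=(3q^2-3q+1)/q^3$, so in fact $a+b/q=\beta$ exactly. This closes the induction.

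\textbf{Main point of care.} The only delicate part is the bookkeeping: the ranges of $t$ must stay admissible for both shifted index pairs, and the coefficient of $\gamma_{n-1,k}$ must be handled correctly. Its factor $(n-1)/n$ cancels exactly in the linear term and is discarded in the tail term. The identity $a+b/q=\beta$ is what dictates the specific form of $\beta$.
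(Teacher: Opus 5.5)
Your proof is correct and uses the same ingredients as the paper: the recurrence of Lemma~\ref{recurrenceproposition}, Lemma~\ref{taillemma} as the terminal estimate, and the identities $a+b=1$ and $a+b/q=1-(1-1/q)^3$. The difference is only in bookkeeping: the paper first unrolls the recurrence $t$ times into a binomial sum (Claim~\ref{rstepclaim}) and applies Lemma~\ref{taillemma} at the end, whereas you induct on $t$ with the target bound itself as the hypothesis, which avoids the binomial-coefficient manipulation.
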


\begin{proof}
Set $a=(2q-1)/q^2$ and $b=(q-1)^2/q^2$, so that $a+b=1$. We first iterate \eqref{recurrence} $r$ times, separating the accumulated contribution of its constant term from the terms remaining after the $r$ iterations. We will then take $r=t$ and apply Lemma~\ref{taillemma} to the remaining terms.

\begin{claim}\label{rstepclaim}
For every integer $r$ with $0\leq r\leq n-k$,
\[
\gamma_{n,k}
\leq
\frac{ar}{n}
+
\sum_{j=0}^r
\binom{r}{j}
a^jb^{r-j}
\frac{n-j}{n}
\gamma_{n-j,k+r-j}.
\]
\end{claim}

Here $j$ records the number of iterations in which the first index is decreased; in the remaining $r-j$ iterations, the second index is increased.

\begin{proof}[Proof of Claim~\ref{rstepclaim}]
We proceed by induction on $r$. When $r=0$, the asserted inequality is an equality. Suppose that it holds for some $r<n-k$. For every $0\leq j\leq r$, we have $(n-j)-(k+r-j)=n-k-r\geq1$. Hence \eqref{recurrence}, with $n$ replaced by $n-j$ and $k$ replaced by $k+r-j$, gives
\[
\gamma_{n-j,k+r-j}
\leq
\frac{a}{n-j}
+
a\frac{n-j-1}{n-j}\gamma_{n-j-1,k+r-j}
+
b\gamma_{n-j,k+r-j+1}.
\]
By the induction hypothesis,
\[
\gamma_{n,k}
\leq
\frac{ar}{n}
+
\sum_{j=0}^r
\binom{r}{j}
a^jb^{r-j}
\frac{n-j}{n}
\gamma_{n-j,k+r-j}.
\]
All the coefficients in this sum are nonnegative. Substituting the preceding bound for $\gamma_{n-j,k+r-j}$ into the right-hand side therefore yields
\[
\begin{aligned}
\gamma_{n,k}
&\leq \frac{ar}{n} + \sum_{j=0}^r \binom{r}{j} a^jb^{r-j} \frac{n-j}{n} \left( \frac{a}{n-j} + a\frac{n-j-1}{n-j}\gamma_{n-j-1,k+r-j}+ b\gamma_{n-j,k+r-j+1} \right)\\
&= \frac{ar}{n}+\frac{a}{n}\sum_{j=0}^r\binom{r}{j}a^jb^{r-j}+\sum_{j=0}^r
\binom{r}{j}
a^{j+1}b^{r-j}
\frac{n-j-1}{n}
\gamma_{n-j-1,k+r-j}\\& \qquad\qquad\qquad\qquad\quad\qquad +
\sum_{j=0}^r
\binom{r}{j}
a^jb^{r+1-j}
\frac{n-j}{n}
\gamma_{n-j,k+r-j+1}.
\end{aligned}
\]
Because $a+b=1$, the first sum in the last expression satisfies
$\frac{a}{n}\sum_{j=0}^r\binom{r}{j}a^jb^{r-j}=\frac{a}{n}(a+b)^r=\frac{a}{n}.$
In the sum containing $\gamma_{n-j-1,k+r-j}$, set $\ell=j+1$, and in the last sum rename $j$ as $\ell$. The preceding inequality then becomes
\[
\begin{aligned}
\gamma_{n,k}
&\leq
\frac{a(r+1)}{n}+
\sum_{\ell=1}^{r+1}
\binom{r}{\ell-1}
a^\ell b^{r+1-\ell}
\frac{n-\ell}{n}
\gamma_{n-\ell,k+r+1-\ell}+
\sum_{\ell=0}^{r}
\binom{r}{\ell}
a^\ell b^{r+1-\ell}
\frac{n-\ell}{n}
\gamma_{n-\ell,k+r+1-\ell}\\
&=
\frac{a(r+1)}{n}
+
\sum_{\ell=0}^{r+1}
\binom{r+1}{\ell}
a^\ell b^{r+1-\ell}
\frac{n-\ell}{n}
\gamma_{n-\ell,k+r+1-\ell}.
\end{aligned}
\]
Indeed, for $1\leq\ell\leq r$, the two sums are combined using
$
\binom{r}{\ell-1}+\binom{r}{\ell}=\binom{r+1}{\ell},
$
while the terms with $\ell=0$ and $\ell=r+1$ occur only in the second and first sums, respectively. After renaming $\ell$ as $j$, the resulting inequality is precisely the asserted inequality with $r+1$ in place of $r$. This completes the induction.
\end{proof}

Apply Claim~\ref{rstepclaim} with $r=t$. For every $0\leq j\leq t$, we have $(n-j)-(k+t-j)=n-k-t\geq0$, so Lemma~\ref{taillemma} gives $\gamma_{n-j,k+t-j}\leq q^{-(k+t-j)}/(q-1)$. Since $(n-j)/n\leq1$, it follows that
\[
\begin{aligned}
\gamma_{n,k}
&\leq
\frac{at}{n}
+
\frac{1}{q-1}
\sum_{j=0}^t
\binom{t}{j}
a^jb^{t-j}
\frac{n-j}{n}
q^{-(k+t-j)}\\
&\leq
\frac{at}{n}
+
\frac{q^{-k}}{q-1}
\sum_{j=0}^t
\binom{t}{j}
a^j\left(\frac{b}{q}\right)^{t-j}=
\frac{at}{n}
+
\frac{q^{-k}}{q-1}
\left(a+\frac{b}{q}\right)^t.
\end{aligned}
\]
Finally, $a+b/q=1-b(1-1/q)=1-(1-1/q)^3$. Substituting this identity and $a=(2q-1)/q^2$ into the preceding inequality gives \eqref{iteratedbound}.
\end{proof}

We are now ready to prove Theorem~\ref{maintheorem}.

\begin{proof}
The case $n=1$ is immediate, since $\operatorname{per}(A_1)$ is uniform on $\mathbb F_q$. Assume that $n\geq2$. Write the last row of $A_n$ as $y=(y_1,\ldots,y_n)$ and let $B$ be the matrix formed by the first $n-1$ rows. For $j\in[n]$, let $B^{(j)}$ be the matrix obtained from $B$ by deleting its $j$th column, and set $c_j=\operatorname{per}(B^{(j)})$ and $c=(c_1,\ldots,c_n)$. By the definition of the permanent, expansion along the last row gives $\operatorname{per}(A_n)=\sum_{j=1}^n c_jy_j$.
For $1\leq i\leq n-1$, write the $i$th row of $B$ as $(b_{i1},\ldots,b_{in})$ and set $L_i=\sum_{j=1}^n b_{ij}x_j$. Applying Lemma~\ref{encodinglemma} with $m=n-1$ and $M=B$, we obtain that
\[
L_1\cdots L_{n-1}
=
\sum_{j=1}^n\operatorname{per}(B^{(j)})x_{[n]\setminus\{j\}}
=
\sum_{j=1}^n c_jx_{[n]\setminus\{j\}}.
\]
The elements $x_{[n]\setminus\{j\}}$, $j\in[n]$, are linearly independent, and hence $L_1\cdots L_{n-1}=0$ if and only if $c=0$. Since the rows of $B$ are independent and uniform on $\mathbb F_q^n$, the elements $L_1,\ldots,L_{n-1}$ are independent and uniform on $(R_n)_1$. Therefore $\Pr(c=0)=\Pr(L_1\cdots L_{n-1}=0)=\gamma_{n,1}$.

Conditional on $B$, the vector $c$ is fixed, while $y$ remains uniform on $\mathbb F_q^n$. If $c=0$, then $\operatorname{per}(A_n)=0$. If $c\neq0$, then for every $z\in\mathbb F_q$, the equation $\sum_{j=1}^n c_jy_j=z$ has exactly $q^{n-1}$ solutions in $\mathbb F_q^n$, so $\Pr(\operatorname{per}(A_n)=z\mid B)=q^{-1}$. Averaging over $B$ yields
\[
\Pr(\operatorname{per}(A_n)=z)
=
\begin{cases}
\displaystyle \frac1q+\left(1-\frac1q\right)\gamma_{n,1},&z=0,\\[6pt]
\displaystyle \frac1q-\frac{\gamma_{n,1}}q,&z\neq0.
\end{cases}
\]
Consequently,
\[
d_{\mathrm{TV}}\!\left(\mathcal L(\operatorname{per}(A_n)),\mathsf U_q\right)
=
\frac12\left(
\left(1-\frac1q\right)\gamma_{n,1}
+
(q-1)\frac{\gamma_{n,1}}q
\right)=
\left(1-\frac1q\right)\gamma_{n,1}.
\]

Applying Proposition~\ref{iterationproposition} with $k=1$, we have that for every $0\leq t\leq n-1$,
\[
\gamma_{n,1}
\leq
\frac{(2q-1)t}{q^2n}
+
\frac{q^{-1}}{q-1}
\left(
1-\left(1-\frac1q\right)^3
\right)^t.
\]
It follows that
\[
\begin{aligned}
d_{\mathrm{TV}}\!\left(\mathcal L(\operatorname{per}(A_n)),\mathsf U_q\right)
&\leq
\left(1-\frac1q\right)
\left[
\frac{(2q-1)t}{q^2n}
+
\frac{q^{-1}}{q-1}
\left(
1-\left(1-\frac1q\right)^3
\right)^t
\right]\\
&=
\frac{(q-1)(2q-1)}{q^3}\frac{t}{n}
+
\frac1{q^2}
\left(
1-\left(1-\frac1q\right)^3
\right)^t,
\end{aligned}
\]
as required.
\end{proof}

The Corollary~\ref{uniformitycorollary} follows from \eqref{parameterbound} by choosing the parameter $t$ independently of $q$.

\begin{proof}[Proof of Corollary~\ref{uniformitycorollary}]
For every $q\geq3$, we have
$\frac{(q-1)(2q-1)}{q^3}\leq\frac{10}{27}$, and $ 1-\left(1-\frac1q\right)^3\leq\frac{19}{27}$.
For $n\geq7$, choose $t=\ceil{\log n/\log(27/19)}$. Then $t\leq n-1$, $(\frac{19}{27})^t\leq n^{-1}$, and $t\leq\log n/\log(27/19)+1$. Substituting these estimates into \eqref{parameterbound} obtains the desired result \eqref{uniformbound}.
The right-hand side of \eqref{uniformbound} tends to zero as $n\to\infty$, uniformly over all odd prime powers $q$. Moreover, for every $x\in\mathbb F_q$,
\[
\left|\Pr[\operatorname{per}(A_n)=x]-\frac1q\right|
\leq
d_{\mathrm{TV}}\!\left(\mathcal L(\operatorname{per}(A_n)),\mathsf U_q\right).
\]
This proves \eqref{uniformitystatement} and hence Conjecture~\ref{uniformityconjecture}; the uniformity of the bound also gives the same conclusion for every sequence $q=q(n)$ of odd prime powers.
\end{proof}

\section*{Declaration on the Use of AI}

During the development of the proof in
Section~\ref{collisionsection}, the authors used generative AI tools as research aids. The conditional-probability formulation leading to the map $T_G$, the recognition that an approach using only one additional row would result in a circular argument, and the resulting introduction of a second row and the associated bilinear form were developed by the authors. At that stage, a generative AI tool suggested diagonalizing the associated quadratic form and applying finite-field point-count estimates to control coordinate collisions. This suggestion helped lead to the dimension-reduction recurrence used in the proof. The authors subsequently worked out and independently verified all details of the argument and take full responsibility for the correctness and content of this article.

\bibliographystyle{plain}
\bibliography{refs}
\end{document}